\documentclass[final,hidelinks,onefignum,onetabnum]{siamart251216}
\usepackage{amsmath,amssymb,mathtools,bm}
\usepackage{graphicx,booktabs,tabularx,array,placeins}
\usepackage{url}

\title{Efficient Matrix-Free Gauss-Newton Traveltime Tomography on Full-Metric Deformed Grids\textsuperscript{*}}
\newcommand{\affilmarkone}{\textsuperscript{\textdagger}}
\newcommand{\affilmarktwo}{\textsuperscript{\textdaggerdbl}}
\newcommand{\affilmarkthree}{\textsuperscript{\S}}
\makeatletter
\author{Zuwei Huang\affilmarkone\,\affilmarkthree
\and Pingchuan Ma\affilmarktwo
\and Peng Yu\affilmarkone
\and Takao Koyama\affilmarkthree
\and \mbox{Luolei~Zhang\affilmarkone}
\and Chongjin Zhao\affilmarkone
\and Mingxin Chu\affilmarkone}
\g@addto@macro\@thanks{\footnotetext[1]{\funding{This work was supported by the National Natural Science Foundation of China (NSFC) under Grant Nos.~42474105 and 42676055.}}}
\g@addto@macro\@thanks{\footnotetext[2]{State Key Laboratory of Marine Geology, Tongji University, Shanghai, China. E-mail addresses: Zuwei Huang (\email{hzw1498218560@tongji.edu.cn}), Peng Yu (\email{yupeng@tongji.edu.cn}), Luolei Zhang (\email{zhangluolei@hotmail.com}), Chongjin Zhao (\email{zcjadc@126.com}), and Mingxin Chu (\email{2111321@tongji.edu.cn}). Corresponding author: Peng Yu.}}
\g@addto@macro\@thanks{\footnotetext[3]{School of Mathematical Sciences, Tongji University, Shanghai, China. E-mail address: Pingchuan Ma (\email{2211180@tongji.edu.cn}).}}
\g@addto@macro\@thanks{\footnotetext[4]{Earthquake Research Institute, The University of Tokyo, Tokyo, Japan. E-mail address: Takao Koyama (\email{tkoyama@eri.u-tokyo.ac.jp}).}}
\makeatother
\headers{Matrix-Free Gauss-Newton on Deformed Grids}{Z. Huang et al.}

\begin{document}
\maketitle

\begin{abstract}
First-arrival traveltime tomography provides a computationally efficient means of imaging subsurface velocity structure, while boundary-conforming deformed grids allow rugged topography to be represented without abandoning a logically structured mesh. In matrix-free Gauss-Newton inversion, however, the linearized full-metric Eikonal transport and its transpose must be applied repeatedly at a fixed background model. Directional sweeping consequently revisits the same state-dependent dependency structure for every Krylov right-hand side. We recast the frozen tangent transport as a directed graph and distinguish loss of the background-traveltime ordering from genuine algebraic cyclicity. Strongly connected components identify the irreducible part of the transport, whereas the remaining dependencies admit exact scalar substitution after reordering. The condensation graph and local block factorizations are constructed once for each frozen source state and then reused for both primal and transpose applications. Numerical experiments on deformed grids and a three-dimensional tomography problem show that the resulting block-triangular formulation preserves the frozen sensitivity action while substantially reducing the repeated transport cost. By eliminating this dominant inner-iteration cost, the proposed method markedly accelerates Gauss-Newton computation and makes full-metric matrix-free inversion on deformed grids practical at much lower cost.
\end{abstract}
\begin{keywords}
traveltime tomography, deformed grids, matrix-free Gauss-Newton, strongly connected components, block triangularization
\end{keywords}
\begin{MSCcodes}
86A22, 65F10, 65K10, 05C20
\end{MSCcodes}

\section{Introduction}\label{introduction}

First-arrival traveltime tomography is widely used to recover subsurface velocity structure from exploration to crustal scales \cite{ref14,ref41,ref62,ref63}. Field-based formulations are especially attractive for dense three-dimensional acquisition because one source-centered Eikonal solve provides the first-arrival field throughout the computational domain and can be sampled at many receivers \cite{ref34,ref39,ref56,ref58}. This source-wise structure removes the need to solve a separate two-point ray problem for every datum and provides a natural basis for PDE-based inversion.

Rugged topography complicates that formulation. Boundary-conforming curvilinear grids retain a logically structured mesh while representing the physical surface geometrically. Transforming the isotropic Eikonal equation to such coordinates introduces a full metric \cite{ref19}, and subsequent work has developed topography-dependent forward, factored, and inversion formulations on these grids \cite{ref13,ref29,ref67,ref68,ref69}. The same metric terms that represent the geometry also couple the logical coordinate directions and thereby alter the ordering properties of the linearized transport.

Ray-free traveltime sensitivities are commonly obtained from tangent or adjoint transport equations \cite{ref15,ref24,ref25,ref31,ref38,ref43,ref49,ref53,ref61}, and first-arrival inversion can be implemented without explicit ray tracing \cite{ref23}. Such formulations are particularly efficient for first-order optimization. Matrix-free Gauss-Newton inversion has a different computational profile: an inner Krylov least-squares solve repeatedly requires both Jacobian and transpose-Jacobian actions at the same outer model \cite{ref71,ref35}. Thus avoiding explicit Jacobian storage is useful only when the repeated linearized transport solves are themselves inexpensive.

Once a nonlinear traveltime field has converged, its tangent transport is frozen during the inner Krylov solve. Repeated directional sweeping nevertheless rediscovers the same source-dependent dependency structure for every new right-hand side. On Cartesian or otherwise causally ordered stencils, marching and sweeping methods can exploit scalar update orderings directly \cite{ref3,ref9,ref40,ref54,ref55,ref66}. On a full-metric deformed grid, background traveltime need not remain a valid topological key for every discrete dependency even though continuous first-arrival causality is preserved.

The distinction between ordering failure and genuine cyclicity is central. Cyclic dependency graphs and strongly connected component (SCC) treatments have a long history in particle-transport sweeps, including parallel and distributed settings \cite{mclendon2001,mclendon2005,ref1,ref57}; block permutations have also been used in seismic tomography \cite{ref5}. These graph and sparse-matrix tools are classical. Their role here is to expose a particular structure of the frozen full-metric Eikonal sensitivity problem: traveltime-order violations can be widespread while the genuinely irreducible SCCs remain local, and the resulting block triangularization can be reused across the paired Jacobian actions required by matrix-free Gauss-Newton.

We therefore represent the frozen sensitivity matrix by its directed dependency graph. SCC condensation yields a directed acyclic graph on which singleton components are handled by scalar substitution and only nontrivial components require local block solves. The graph, ordering, and local factors are constructed once for each frozen source state and reused for both primal and transpose transport. Sections~2--4 develop this construction from the full-metric Eikonal equation to the matrix-free inversion, Section~5 presents controlled and three-dimensional numerical experiments, and Section~6 summarizes the main conclusions.

\section{Full-Metric Eikonal State and Frozen Sensitivity Transport}\label{full-metric-eikonal-state-and-frozen-sensitivity-transport}

We first derive the full-metric first-arrival equation and its tangent transport at fixed grid geometry. The resulting characteristic field is frozen during a Gauss-Newton step and provides the link between continuous first-arrival causality and the discrete ordering studied below.

\subsection{First-arrival state equation and point-source factorization}\label{first-arrival-state-equation-and-point-source-factorization}

For a point source at physical position \(\mathbf{x}_{\alpha}\), the first-arrival traveltime \(T_{\alpha}\left( \mathbf{x} \right)\) is the viscosity solution of the isotropic Eikonal equation \cite{ref7,ref47}

\begin{equation}\label{eq:eikonal}
\parallel \nabla_{\mathbf{x}}T_{\alpha}\left( \mathbf{x} \right) \parallel_{2}^{2} = s^{2}\left( \mathbf{x} \right),\quad\quad\mathbf{x} \in \Omega,\ \mathbf{x} \neq \mathbf{x}_{\alpha},
\end{equation}

with source condition \(T_{\alpha}(\mathbf{x}_{\alpha})=0\), where \(s(\mathbf{x})=1/v(\mathbf{x})\) is slowness. The source index is denoted by \(\alpha=1,\ldots,N_s\) so that \(s\) can be reserved for slowness throughout the derivation.

The point-source singularity limits the accuracy of low-order upwind formulas near the source. Following factored Eikonal methods \cite{ref9,ref54}, we write

\begin{equation}
T_{\alpha}\left( \mathbf{x} \right) = T_{0,\alpha}\left( \mathbf{x} \right)\,\tau_{\alpha}\left( \mathbf{x} \right),\quad\quad T_{0,\alpha}\left( \mathbf{x} \right) = s_{\alpha} \parallel \mathbf{x} - \mathbf{x}_{\alpha} \parallel_{2},
\end{equation}

where \(s_{\alpha}\) is the source slowness and \(\tau_{\alpha}\) is smoother than the full traveltime. The nonlinear state calculation uses the corresponding full-metric factored Eikonal equation on a boundary-conforming logically structured grid, consistent with established topography-dependent formulations \cite{ref19,ref67,ref68}. In what follows, the state defined by \cref{eq:eikonal} is treated as converged; the development concerns the linear transport that must then be applied repeatedly inside the inversion.

\subsection{Boundary-conforming coordinates and the full metric}\label{boundary-conforming-coordinates-and-the-full-metric}

Let the logical coordinates be \(\mathbf{\xi}=(\xi,\eta,\zeta)^T\), and let the smooth, nonsingular mapping \(\mathbf{x}=\mathbf{x}(\mathbf{\xi})\) map the logical grid to the physical domain. Its Jacobian is \(J=\partial\mathbf{x}/\partial\mathbf{\xi}\), with \(J_{ij}=\partial x_i/\partial\xi_j\). For any scalar field \(T\), the chain rule gives \(\nabla_{\mathbf{\xi}}T=J^T\nabla_{\mathbf{x}}T\), and hence \(\nabla_{\mathbf{x}}T=J^{-T}\nabla_{\mathbf{\xi}}T\).

Substitution of these relations into \cref{eq:eikonal} gives

\begin{equation}\label{eq:metric-eikonal}
\left( \nabla_{\mathbf{\xi}}T \right)^{T}G\nabla_{\mathbf{\xi}}T = s^{2},\quad\quad G = J^{- 1}J^{- T}.
\end{equation}

For a nonsingular mapping, \(G\) is symmetric positive definite because, for every nonzero vector \(\mathbf{z}\),

\begin{equation}
\mathbf{z}^{T}G\mathbf{z} = \mathbf{z}^{T}J^{- 1}J^{- T}\mathbf{z} = \parallel J^{- T}\mathbf{z} \parallel_{2}^{2} > 0.
\end{equation}

Writing the metric explicitly,

\begin{equation}
G = \begin{bmatrix}
G_{\xi\xi} & G_{\xi\eta} & G_{\xi\zeta} \\
G_{\xi\eta} & G_{\eta\eta} & G_{\eta\zeta} \\
G_{\xi\zeta} & G_{\eta\zeta} & G_{\zeta\zeta}
\end{bmatrix},
\end{equation}

shows how nonorthogonality couples logical directions. This coupling is geometric, not a physical anisotropy: an isotropic physical Eikonal equation acquires an anisotropic-like representation after a nonorthogonal coordinate transformation \cite{ref19}.

A geometric interpretation is useful later. Define the covariant coordinate basis \(\mathbf{g}_i=\partial\mathbf{x}/\partial\xi_i\) and the reciprocal basis \(\mathbf{g}^i=\nabla_{\mathbf{x}}\xi_i\), with \(\mathbf{g}^i\cdot\mathbf{g}_j=\delta_j^i\). Then \(T_i=\partial T/\partial\xi_i=\mathbf{g}_i\cdot\nabla_{\mathbf{x}}T\) is a covariant derivative, while \(G_{ij}=\mathbf{g}^i\cdot\mathbf{g}^j\).

On an orthogonal grid the covariant and reciprocal directions are parallel. On a skew grid they are not, and this distinction is what later permits a logical characteristic component to have a sign different from the corresponding logical traveltime derivative.

\subsection{Continuous linearization at fixed geometry}\label{continuous-linearization-at-fixed-geometry}

The inversion model variable is subsurface velocity; the grid geometry is held fixed during a Gauss-Newton step, so \(\delta J=0\) and \(\delta G=0\). This assumption is essential. If geometry itself were an inversion variable, an additional term involving \(\delta G\) would appear in the tangent equation.

Define the full-metric Eikonal residual

\begin{equation}
\mathcal{E}(T,s) = \frac{1}{2}\left\lbrack \left( \nabla_{\mathbf{\xi}}T \right)^{T}G\nabla_{\mathbf{\xi}}T - s^{2} \right\rbrack.
\end{equation}

Taking the first variation at fixed \(G\) gives

\begin{equation}
\delta\mathcal{E} = \left( G\nabla_{\mathbf{\xi}}T \right)^{T}\nabla_{\mathbf{\xi}}\delta T - s\,\delta s.
\end{equation}

Define the metric-weighted characteristic vector \(\mathbf{a}=G\nabla_{\mathbf{\xi}}T\). Using this definition in the first variation gives the frozen tangent equation

\begin{equation}\label{eq:tangent}
\mathbf{a} \cdot \nabla_{\mathbf{\xi}}\delta T = s\,\delta s.
\end{equation}

Equation~\eqref{eq:tangent} is the continuous Fr\'echet derivative of the Eikonal state equation about the converged first arrival: a slowness perturbation supplies the source term and the corresponding traveltime perturbation is transported along the frozen characteristic field. We follow the established topography-dependent framework in which the nonlinear state is computed with the three-dimensional FTDEE solver \cite{ref68}, while tangent and adjoint actions are obtained from this linearized full-metric transport \cite{ref13,ref69,ref72}. The contribution below is the reusable algebraic solution of that transport, not a new state or sensitivity equation.

Because \(s=v^{-1}\), \(\delta s=-v^{-2}\delta v=-s^2\delta v\); for a relative velocity perturbation \(m=\delta v/v\), this becomes \(\delta s=-sm\). These relations will be used to define the model-to-transport map in Section 4.

\subsection{Physical causality and scalar ordering}\label{physical-characteristic-causality-versus-componentwise-monotonicity}

Before discretizing \cref{eq:tangent}, we distinguish physical first-arrival causality from the scalar ordering used by a discrete sweep. The full-metric equation preserves the former. Parameterize a continuous characteristic in logical coordinates by \(d\mathbf{\xi}/d\tau=\mathbf{a}\). Along this characteristic,

\begin{equation}\label{eq:causality}
\frac{dT}{d\tau} = \nabla_{\mathbf{\xi}}T \cdot \frac{d\mathbf{\xi}}{d\tau} = \left( \nabla_{\mathbf{\xi}}T \right)^{T}G\nabla_{\mathbf{\xi}}T = s^{2} > 0.
\end{equation}

Thus \cref{eq:causality} shows that a nonsingular coordinate transformation does not create a closed physical first-arrival ray or reverse propagation in time. This identity is the reference point for interpreting the discrete graph: any directed cycle found later is an algebraic cycle among coordinate-wise dependencies, not a physical causal loop.

The stronger property needed for a scalar traveltime ordering is componentwise. On an orthogonal grid, \(G=\operatorname{diag}(g^{11},g^{22},g^{33})\), where \(g^{ii}>0\), and therefore \(a^i=g^{ii}T_i\) and \(T_i a^i=g^{ii}T_i^2\geq0\). Hence the direction selected by each logical component is consistent with the sign of the corresponding logical traveltime derivative.

On a full metric, by contrast,

\begin{equation}
a^{\xi} = G_{\xi\xi}T_{\xi} + G_{\xi\eta}T_{\eta} + G_{\xi\zeta}T_{\zeta},
\end{equation}

with analogous formulas for \(a^{\eta}\) and \(a^{\zeta}\). The Eikonal identity guarantees only the sum

\begin{equation}
T_{\xi}a^{\xi} + T_{\eta}a^{\eta} + T_{\zeta}a^{\zeta} = s^{2} > 0,
\end{equation}

not the sign of each term separately. A coordinate-wise reversal in the \(\xi\) direction is therefore possible when \(T_{\xi}a^{\xi}<0\), or equivalently

\begin{equation}\label{eq:reversal}
T_{\xi}\left( G_{\xi\eta}T_{\eta} + G_{\xi\zeta}T_{\zeta} \right) < - G_{\xi\xi}T_{\xi}^{2}.
\end{equation}

The reversal condition in \cref{eq:reversal} isolates the mechanism. The off-diagonal metric alone is not sufficient, and a complicated wavefront alone is not sufficient. Reversal depends on their interaction: grid geometry determines the cross-metric coupling, while the background wavefront determines the local components of \(\nabla_{\mathbf{\xi}}T\). It is particularly easy for a cross term to change the sign of \(a^{i}\) where the corresponding diagonal contribution \(g^{ii}T_{i}\) is small. This is a mathematical possibility of the full-metric representation; the frequency with which it occurs, and whether several reversals form a closed graph cycle, are problem-dependent quantities to be measured rather than assumed.

\section{Frozen Discrete Transport and the Tomography Dependency Graph}\label{frozen-discrete-transport-and-the-tomography-dependency-graph}

Equation~\eqref{eq:tangent} describes transport along the continuous characteristic, but the cost of a repeated sensitivity solve is determined by the dependency structure of its discrete counterpart. This section constructs that discrete operator and then reads its nonzero pattern as a directed graph. The graph viewpoint makes it possible to distinguish a poor scalar ordering from genuine algebraic feedback, a distinction that is obscured when every right-hand side is handled by repeated geometric sweeps.

\subsection{Face-based upwinding}\label{face-based-upwind-transport}

The tangent equation is discretized after convergence of the nonlinear FTDEE state. We use the face-based upwind construction associated with established 3-D full-metric adjoint transport \cite{ref13,ref72}, written here in primal form because its dependency graph is the object of interest.

With \(\mathbf a=G\nabla_{\boldsymbol\xi}T\), a face normal to each logical direction uses the corresponding interpolated signed coefficient \(-a_f^{\xi}\), \(-a_f^{\eta}\), or \(-a_f^{\zeta}\). Its sign selects the upstream side and its magnitude supplies the coupling strength. Metric coefficients and traveltime derivatives use the face averages of the established 3-D discretization \cite{ref72}.

For an interior node \(i\), let \(U(i)\) contain the selected upstream neighbors and let \(c_{ij}\geq0\) denote their coupling magnitudes. Define \(d_i=\sum_{j\in U(i)}c_{ij}\) and \(w_{ij}=c_{ij}/d_i\). After normalization, an interior row has the form

\begin{equation}\label{eq:discrete-row}
\delta T_{i} - \sum_{j \in U(i)}^{}w_{ij}\delta T_{j} = q_{i},
\end{equation}

Collecting the rows gives the source-dependent linear system

\begin{equation}\label{eq:transport-system}
A\,\delta\mathbf{T} = \mathbf{q},\quad\quad A = I - W.
\end{equation}

On the structured hexahedral stencil, an interior row has at most six geometric neighbor dependencies. Source-seed rows are treated separately from the generic interior normalization. The directed graph introduced below is constructed from these actual discrete coefficients, not from geometric proximity and not from a post-processed ray field.

\subsection{Scale-consistent transport right-hand side}\label{scale-consistent-transport-right-hand-side}

The nonlinear FTDEE state and the full-metric tangent transport follow the continuous-then-discretize construction used in topography-dependent adjoint tomography \cite{ref13,ref68,ref69,ref72}. Because the factored nonlinear state and the linear transport use different discrete forms, the implementation calibrates the model-to-transport right-hand side so that a uniform relative velocity scaling is reproduced exactly by the frozen operator.

Let \(\boldsymbol\ell=A\mathbf T\), and for nonsource rows define \(c_i^{(s)}=\ell_i/s_i\). The interior transport source associated with a nodal slowness perturbation is \(q_i=c_i^{(s)}\delta s_i\), with source rows supplied by the differentiated source treatment. For the uniform relative perturbation \(\delta v=v\), we have \(\delta s=-s\) and hence \(\mathbf q=-A\mathbf T\). If \(A\) is nonsingular, \(A\delta\mathbf T=-A\mathbf T\) implies \(\delta\mathbf T=-\mathbf T\), which is the exact first-order scaling of traveltime under a uniform relative velocity change. The calibration changes only the right-hand-side map. It does not modify \(A\), its sparsity pattern, its dependency graph, or the relation between primal and transpose transport.

\subsection{From a sensitivity equation to a directed graph}\label{from-a-sensitivity-equation-to-a-directed-graph}

The matrix in \cref{eq:transport-system} defines a directed dependency graph \(\mathcal G(A)=(V,E)\), with one vertex for each transport unknown and an edge \(j\rightarrow i\) if and only if \(w_{ij}>0\). The edge has a direct algebraic meaning: the equation for unknown \(i\) requires the value at unknown \(j\). The nonzero pattern of the frozen transport therefore records the ordering constraints of the sensitivity solve.

\begin{proposition}[Graph acyclicity and scalar triangularization]\label{prop:dag-triangular}
Assume the diagonal entries of $A$ are nonzero. Then
\begin{equation}\label{eq:dag-triangular}
\mathcal{G}(A)\text{ is a DAG}\quad \Longleftrightarrow \quad
\Pi A\Pi^{T}\text{ is triangular for some permutation }\Pi.
\end{equation}
\begin{proof}
If $\mathcal{G}(A)$ is acyclic, it admits a topological ordering. Permuting the unknowns by that ordering places every directed dependency on the same side of the diagonal, so the permuted matrix is triangular. Conversely, the directed graph of a triangular matrix cannot contain a directed cycle, because a cycle would require at least one dependency to cross the diagonal in the forbidden direction.
\end{proof}
\end{proposition}

Proposition~\ref{prop:dag-triangular} gives a precise interpretation of scalar causality for the frozen sensitivity system: one-pass substitution is possible if and only if the dependency graph is acyclic, regardless of whether the valid order is the background traveltime order.

\subsection{Traveltime is a special topological key, not the definition of causality}\label{traveltime-is-a-special-topological-key-not-the-definition-of-causality}

On an orthogonal monotone stencil, Section 2.4 gives sign consistency between \(T_i\) and \(a^i\). Away from equal-time degeneracies, the sufficient condition \(j\rightarrow i\Rightarrow T_j<T_i\) holds. Sorting nodes by increasing background traveltime then supplies a topological order. This is the discrete algebraic counterpart of the causal ordering exploited by marching methods and by triangular sensitivity calculations when a causal fast marching order is available \cite{ref47,ref54,ref70}.

On a full metric, the face-upwind decision follows the sign of \(a^{i}\), whereas the proposed traveltime key is determined by \(T_{i}\). If \(T_{i}a^{i} < 0\), a selected face dependency can oppose the local scalar traveltime order. We call such an edge \emph{traveltime-order violating}. The term \emph{noncausal edge} may be used only in this restricted algebraic sense; physical first-arrival causality remains intact by Section 2.4.

Crucially, the existence of an edge \(j \rightarrow i\) with \(T_j>T_i\) means only that traveltime is not a valid topological key. It does not imply that \(\mathcal G(A)\) contains a cycle. By \cref{prop:dag-triangular}, an acyclic graph with such edges still admits another scalar ordering and remains exactly solvable by substitution. The hierarchy relevant to the solver is therefore \(\text{traveltime-ordered DAG}\subset\text{general DAG}\subset\text{general directed graph}\), and only the last class can contain irreducible cyclic subsets.

\subsection{Why a local cycle can obstruct a large scalar elimination region}\label{why-a-local-cycle-can-obstruct-a-large-scalar-elimination-region}

The distinction between a small cyclic core and a large unresolved scalar region is a graph property, not a convergence-tolerance effect. Consider Kahn's topological elimination: vertices with zero unresolved indegree are removed recursively. Let \(R\) denote the set remaining when no further vertex can be removed.

\begin{proposition}[Downstream closure of the Kahn residual]\label{prop:kahn}
For a finite directed graph, every vertex in $R$ lies on a directed cycle or is reachable from a directed cycle contained in $R$. Conversely, every vertex reachable from a directed cycle remains unresolved by Kahn elimination.
\begin{proof}
Every vertex in the residual subgraph has at least one predecessor in $R$. Starting from any $v\in R$ and repeatedly following a predecessor yields, by finiteness, a repeated vertex and hence a directed cycle. Reversing this predecessor chain gives a directed path from that cycle to $v$. Conversely, vertices on a directed cycle can never reach zero unresolved indegree. Any vertex reachable from such a cycle retains at least one unresolved predecessor along the path and therefore also cannot be removed.
\end{proof}
\end{proposition}

Proposition~\ref{prop:kahn} explains a central computational effect. A nontrivial SCC may occupy only a small local region, yet every downstream vertex that depends on it is unavailable to scalar topological substitution until the feedback is resolved. Thus the fraction of vertices unresolved by scalar elimination can be much larger than the fraction actually belonging to cyclic SCCs. This amplification is structural and is independent of the stopping criterion later used by an iterative transport solver.

\subsection{Repeated directional sweeping as an iterative matrix splitting}\label{repeated-directional-sweeping-as-an-iterative-matrix-splitting}

The graph analysis above also clarifies what directional sweeping is doing when its geometric order is not topological. Directional sweeping remains a valid iterative strategy, but it then acts as a stationary iteration for the frozen linear system. For a fixed scalar ordering \(\sigma\), write \(A=M_{\sigma}-N_{\sigma}\), where \(M_{\sigma}\) contains the diagonal and dependencies available in the current Gauss-Seidel pass. One sweep is

\begin{equation}
\mathbf{x}^{(k + 1)} = M_{\sigma}^{- 1}N_{\sigma}\mathbf{x}^{(k)} + M_{\sigma}^{- 1}\mathbf{b}.
\end{equation}

The iteration matrix is \(G_{\sigma}=M_{\sigma}^{-1}N_{\sigma}\). If \(\sigma\) is an exact topological order of an acyclic graph, then all dependencies are available in one pass, \(N_{\sigma}=0\), and therefore \(G_{\sigma}=0\). The sweep is then identical to direct scalar substitution. If the graph is acyclic but the imposed geometric sweep order is not the exact graph order, \(G_{\sigma}\neq0\) and additional passes are required even though no SCC exists. If a nontrivial SCC is present, no scalar ordering can place all internal mutual dependencies in a one-pass triangular part; some internal fixed-point iteration remains unless the component is treated as a block.

Hence extra sweeping work comes from ordering mismatch on the acyclic graph and true feedback inside irreducible SCCs. This distinction is important for both interpretation and benchmarking. Related transport-sweep literature similarly separates scheduling/order effects from cyclic mesh dependencies \cite{mclendon2001,mclendon2005,ref1,ref57}. For matrix-free Gauss-Newton, however, both effects are revisited for every right-hand side even though the frozen matrix \(A\) is unchanged. This motivates exposing the reusable ordering once and treating only the irreducible part as a block.

\section{Reusable SCC/BTF Transport for Matrix-Free Gauss-Newton Inversion}\label{reusable-block-triangular-transport-for-matrix-free-gauss-newton}

The preceding analysis suggests that the frozen transport should not be iterated globally when most of its graph is already scalar-reorderable. We instead expose that structure once, condense only the irreducible dependencies into blocks, and reuse the resulting block triangular form for every right-hand side generated by the inner Krylov method. The construction below is applied independently to each source after its nonlinear traveltime state has converged.

\subsection{Strongly connected components and exact block triangularization}\label{strongly-connected-components-and-exact-block-triangularization}

Strongly connected components are the maximal subsets of vertices for which every pair is mutually reachable \cite{ref50}. The SCC partition is unique up to component ordering. Collapsing every SCC into one supernode produces the \emph{condensation graph}, which is always a DAG. Classical sparse-matrix work uses this graph structure to obtain block triangular forms \cite{ref8}.

Let $\Pi$ order the SCC condensation graph topologically. The permuted operator then has the block lower-triangular form

\begin{equation}\label{eq:block-form}
\widehat{A} = \Pi A\Pi^{T} = \begin{bmatrix}
A_{11} & 0 & \cdots & 0 \\
A_{21} & A_{22} & \cdots & 0 \\
 \vdots & \vdots & \ddots & \vdots \\
A_{m1} & A_{m2} & \cdots & A_{mm}
\end{bmatrix}.
\end{equation}

Each diagonal block \(A_{kk}\) corresponds to one SCC. A singleton SCC yields the scalar block \(\lbrack 1\rbrack\) under the normalized transport. A nontrivial SCC is precisely a set that cannot be fully triangularized by any scalar permutation. In this sense the SCC decomposition is not an arbitrary block partition: it gives the exact maximal scalar-reorderable structure of the frozen transport.

For the \(k\)th block row, \(A_{kk}\mathbf{x}_k=\mathbf{b}_k-\sum_{\ell<k}A_{k\ell}\mathbf{x}_{\ell}\), and all predecessor blocks are known in topological order.

\begin{proposition}[Exact block traversal]\label{prop:block-traversal}
If every diagonal SCC block in \cref{eq:block-form} is nonsingular, a topological block traversal solves the full frozen system exactly up to floating-point factorization and substitution error.
\end{proposition}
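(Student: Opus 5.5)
The plan is to prove the statement by forward block substitution on the permuted system, using induction on the block index $k$, and then to transfer the result back to the original ordering through the permutation $\Pi$. First I rewrite $A\,\delta\mathbf T=\mathbf q$ as $\widehat A\,\widehat{\mathbf x}=\widehat{\mathbf b}$, where $\widehat{\mathbf x}=\Pi\,\delta\mathbf T$, $\widehat{\mathbf b}=\Pi\mathbf q$, and $\widehat A$ is given by \cref{eq:block-form}. This step is valid because $\Pi^T\Pi=I$. I then partition $\widehat{\mathbf x}$ and $\widehat{\mathbf b}$ conformally with the SCC blocks. The block lower-triangular shape holds because the SCC ordering is topological on the condensation graph, which is a DAG. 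Every nonzero $A_{k\ell}$ with $k\neq\ell$ corresponds to an edge from component $\ell$ to component $k$, and topological order forces $\ell<k$. I will state this explicitly, since it is the only structural input.

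Next comes the induction. The first block row reads $A_{11}\mathbf x_1=\mathbf b_1$, and since $A_{11}$ is nonsingular, $\mathbf x_1$ is uniquely determined. Now assume $\mathbf x_1,\dots,\mathbf x_{k-1}$ are known. The $k$th block row is $A_{kk}\mathbf x_k=\mathbf b_k-\sum_{\ell<k}A_{k\ell}\mathbf x_\ell$, whose right-hand side is computable, so nonsingularity of $A_{kk}$ gives a unique $\mathbf x_k$. In the same step I will note that $\det\widehat A=\prod_k\det A_{kk}\neq0$, so $A$ itself is nonsingular. The vector assembled from the traversal therefore satisfies every block row and is the unique solution. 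Undoing the permutation, $\delta\mathbf T=\Pi^T\widehat{\mathbf x}$ solves the original frozen system exactly.

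For singleton SCCs, I will remark that $A_{kk}=[1]$ under the normalization in \cref{eq:discrete-row}. The corresponding step is then pure scalar substitution, consistent with \cref{prop:dag-triangular}. The phrase ``up to floating-point factorization and substitution error'' will be handled by observing that the only inexact operations are the local factorizations or solves with $A_{kk}$ and the matrix-vector products $A_{k\ell}\mathbf x_\ell$. The argument is exact in exact arithmetic, and no fixed-point iteration or tolerance is involved, in contrast to the splitting in Section~3.6.

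The main obstacle is a subtle one. I need to make sure the off-diagonal coupling never points to a later block; otherwise the traversal would use unknowns that have not yet been computed. This is where the maximality of SCCs and the acyclicity of the condensation graph enter. I would argue it by contradiction: an edge from a later component back to an earlier one would either contradict the topological order or create a cycle among distinct SCCs, and such a cycle would merge them into a single SCC.
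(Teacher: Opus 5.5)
Your proposal is correct and takes essentially the same approach as the paper. The paper's proof is just block forward substitution on \cref{eq:block-form} with every predecessor block already known in topological order, with $\det\widehat A=\prod_k\det A_{kk}\neq0$ recorded separately in \cref{cor:global-invertibility}; your induction, your argument that off-diagonal blocks couple only to earlier components, and your permutation bookkeeping make those steps precise (the determinant remark is optional, since the induction already gives existence and uniqueness).
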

The result follows directly from block forward substitution: once the diagonal SCC blocks are solved, no global fixed-point iteration is required.

\subsection{A sufficient nonsingularity condition for local cyclic blocks}\label{a-sufficient-nonsingularity-condition-for-local-cyclic-blocks}

For a nontrivial SCC \(C\), the normalized local block has the form \(A_C=I-W_{CC}\), where \(W_{CC}\geq0\). Because the full row weights are normalized, \(\sum_{j\in C}w_{ij}\leq1\); the inequality is strict for a row that also receives positive dependency weight from a predecessor outside \(C\).

With an edge for every positive dependency weight, the SCC property makes \(W_{CC}\) irreducible (edge reversal does not change strongly connected components). Moreover, restricting a row to \(C\) removes every positive weight carried by an external predecessor, which gives the strict internal row-sum inequality required below.

\begin{theorem}[Sufficient local invertibility]\label{thm:local-invertibility}
Suppose $W_{CC}$ is nonnegative and irreducible, every row sum is at most one, and at least one row sum is strictly smaller than one. Then
\begin{equation}\label{eq:rho-local}
\rho(W_{CC})<1,
\end{equation}
and hence $A_C=I-W_{CC}$ is a nonsingular M-matrix with
\begin{equation}\label{eq:neumann-local}
A_C^{-1}=\sum_{n=0}^{\infty}W_{CC}^{n}\ge 0.
\end{equation}
\begin{proof}
The row-sum bound gives $\rho(W_{CC})\le \|W_{CC}\|_\infty\le1$. Assume for contradiction that $\rho(W_{CC})=1$. By Perron-Frobenius theory for irreducible nonnegative matrices \cite{ref4}, there exists $\mathbf z>0$ such that $W_{CC}\mathbf z=\mathbf z$. Let $M=\max_i z_i$ and \(S=\{i:z_i=M\}\). For any \(i\in S\),
\begin{equation}
M=\sum_j w_{ij}z_j\le M\sum_j w_{ij}\le M.
\end{equation}
Both inequalities must be equalities. Hence the \(i\)th row sum is one and every \(j\) with \(w_{ij}>0\) also belongs to \(S\). Thus \(S\) is a nonempty successor-closed set in the directed graph of \(W_{CC}\), where \(i\to j\) when \(w_{ij}>0\). Irreducibility implies that every vertex is reachable from any \(i\in S\), so \(S=C\). Therefore \(\mathbf z=M\mathbf 1\), and \(W_{CC}\mathbf z=\mathbf z\) forces every row sum to equal one, contradicting the strict inequality in at least one row. Thus \cref{eq:rho-local} holds, and the Neumann series in \cref{eq:neumann-local} converges and is nonnegative.
\end{proof}
\end{theorem}

The theorem is sufficient, not necessary. The method does not assume that every block encountered in every problem satisfies this condition; local conditioning and factorization success are numerical quantities that must be checked in experiments.

\begin{corollary}\label{cor:global-invertibility}
If every diagonal SCC block is nonsingular, then the global frozen operator $A$ is nonsingular.
\end{corollary}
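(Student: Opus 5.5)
The plan is to use the block lower-triangular form \cref{eq:block-form}. Let $\Pi$ be the permutation that orders the SCC condensation graph topologically, so that $\widehat{A}=\Pi A\Pi^{T}$ is block lower triangular with diagonal blocks $A_{11},\dots,A_{mm}$. A permutation similarity preserves nonsingularity, so it is enough to show that $\widehat{A}$ is nonsingular.

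I would give two complementary arguments. The first is determinantal. For a block triangular matrix, $\det\widehat{A}=\prod_{k=1}^{m}\det A_{kk}$. This follows by induction on $m$ from the $2\times 2$ block case,
\begin{equation*}
\det\begin{bmatrix} A_{11} & 0\\ B & D\end{bmatrix}=\det A_{11}\,\det D ,
\end{equation*}
where $D$ is the trailing block triangular matrix. Then $\det A=\det\widehat{A}\neq 0$ because each factor is nonzero by hypothesis.

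The second argument is constructive and ties the result to \cref{prop:block-traversal}. I would show that $\widehat{A}\mathbf{x}=0$ forces $\mathbf{x}=0$ by induction over the block rows in topological order. The first block row reads $A_{11}\mathbf{x}_1=0$, so $\mathbf{x}_1=0$. Now suppose $\mathbf{x}_\ell=0$ for all $\ell<k$. The $k$th block row reduces to $A_{kk}\mathbf{x}_k=0$, hence $\mathbf{x}_k=0$. Therefore the kernel of $\widehat{A}$ is trivial. Since $\widehat{A}$ is square, it is invertible. The same traversal with a general right-hand side gives existence, which recovers the exact solvability of \cref{prop:block-traversal}.

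Only one point needs care: verifying that the topological ordering of the condensation really yields zero blocks above the diagonal. This holds because every edge $j\to i$ between distinct SCCs goes from an earlier component to a later one. I would also note that singleton blocks equal $[1]$, and that for nontrivial blocks \cref{thm:local-invertibility} supplies a sufficient condition for the hypothesis. I expect no genuine obstacle beyond this bookkeeping.
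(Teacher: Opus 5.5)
Your proposal is correct, and your first (determinantal) argument is essentially the paper's one-line proof: $\det\widehat{A}=\prod_{k=1}^{m}\det A_{kk}\neq 0$, and $\widehat{A}=\Pi A\Pi^{T}$ is a permutation similarity of $A$. Your second argument, showing the kernel is trivial by block forward substitution in topological order, is also valid and has the advantage of linking the result to \cref{prop:block-traversal}, but it is not needed.
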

Indeed, \(\det(\widehat A)=\prod_{k=1}^{m}\det(A_{kk})\neq0\), and $\widehat A$ is a permutation of $A$.

\subsection{Primal and transpose block solves with reusable factors}\label{primal-and-transpose-block-solves-with-reusable-factors}

Singleton components require only scalar substitution. For each nontrivial component \(C\), factorize once with pivoting as \(P_CA_C=L_CU_C\). Every subsequent primal right-hand side reuses this factorization. The transpose of the globally permuted system is block upper triangular, so the SCC condensation graph is traversed in reverse. From the stored factorization, \(A_C^T=U_C^TL_C^TP_C\); thus the local problem \(A_C^T\mathbf y=\mathbf r\) is solved by triangular substitutions with \(U_C^T\) and \(L_C^T\) followed by the stored permutation. No factorization of \(A_C^T\) is needed. This reuse is especially important for Golub-Kahan Krylov least-squares methods, whose inner iteration alternates applications of an operator and its transpose \cite{ref71}.

For source \(\alpha\), define the reusable state-dependent setup

\begin{equation}
\mathcal{S}_{\alpha} = \left\{ T_{\alpha},A_{\alpha},\mathcal{G}_{\alpha},\Pi_{\alpha},\{ P_{C},L_{C},U_{C}\}_{C \in \mathcal{C}_{\alpha}} \right\}.
\end{equation}

Once \(\mathcal S_{\alpha}\) has been built for the current outer model, different Krylov right-hand sides require only \(\mathbf q^{(m)}\mapsto A_{\alpha}^{-1}\mathbf q^{(m)}\) and \(\mathbf r^{(m)}\mapsto A_{\alpha}^{-T}\mathbf r^{(m)}\). This setup-apply separation is the main reason the graph representation is valuable for matrix-free Gauss-Newton rather than merely for a single sensitivity calculation.

\subsection{Paired matrix-free Jacobian and transpose actions}\label{paired-matrix-free-jacobian-and-transpose-actions}

Let \(M_{\alpha}\) map a velocity perturbation to the nodal slowness perturbation used by source \(\alpha\), \(C_{\alpha}\) map that slowness perturbation to the calibrated transport right-hand side, and \(P_{\alpha}\) sample the transported traveltime perturbation at the source's receivers. The implementation also contains a source-local direct data term \(D_{\alpha}\) for source configurations in which the differentiated source treatment contributes directly to receiver interpolation. With these maps, the source Jacobian can be written as

\begin{equation}\label{eq:source-jacobian}
J_{\alpha} = \left( P_{\alpha}A_{\alpha}^{- 1}C_{\alpha} + D_{\alpha} \right)M_{\alpha}.
\end{equation}

For common off-grid source configurations, \(D_{\alpha} = 0\). The direct term is retained here so that the operator identity matches the general source treatment rather than only the most common case.

Given a model-space vector \(\mathbf p\), the matrix-free product first forms \(\delta\mathbf s_{\alpha}=M_{\alpha}\mathbf p\) and \(\mathbf q_{\alpha}=C_{\alpha}\delta\mathbf s_{\alpha}\), solves \(A_{\alpha}\delta\mathbf T_{\alpha}=\mathbf q_{\alpha}\), and returns \(J_{\alpha}\mathbf p=P_{\alpha}\delta\mathbf T_{\alpha}+D_{\alpha}\delta\mathbf s_{\alpha}\). For a data-space vector \(\mathbf y_{\alpha}\), define the adjoint transport variable by \(A_{\alpha}^T\boldsymbol\lambda_{\alpha}=P_{\alpha}^T\mathbf y_{\alpha}\). Taking the algebraic transpose of the same discrete maps gives

\begin{equation}\label{eq:source-transpose}
J_{\alpha}^{T}\mathbf{y}_{\alpha} = M_{\alpha}^{T}\left( C_{\alpha}^{T}\mathbf{\lambda}_{\alpha} + D_{\alpha}^{T}\mathbf{y}_{\alpha} \right).
\end{equation}

The pairing follows directly from the discrete maps. For arbitrary \(\mathbf p\) and \(\mathbf y_{\alpha}\),
\begin{align*}
\langle J_{\alpha}\mathbf p,\mathbf y_{\alpha}\rangle
&=\left\langle P_{\alpha}A_{\alpha}^{-1}C_{\alpha}M_{\alpha}\mathbf p,\mathbf y_{\alpha}\right\rangle
 +\left\langle D_{\alpha}M_{\alpha}\mathbf p,\mathbf y_{\alpha}\right\rangle \\
&=\left\langle C_{\alpha}M_{\alpha}\mathbf p,A_{\alpha}^{-T}P_{\alpha}^T\mathbf y_{\alpha}\right\rangle
 +\left\langle M_{\alpha}\mathbf p,D_{\alpha}^T\mathbf y_{\alpha}\right\rangle \\
&=\left\langle\mathbf p,M_{\alpha}^T\left(C_{\alpha}^TA_{\alpha}^{-T}P_{\alpha}^T\mathbf y_{\alpha}+D_{\alpha}^T\mathbf y_{\alpha}\right)\right\rangle
=\langle\mathbf p,J_{\alpha}^T\mathbf y_{\alpha}\rangle.
\end{align*}

Thus \cref{eq:source-jacobian,eq:source-transpose} define the paired discrete Jacobian and transpose-Jacobian actions used below. The former discretizes the continuous Fr\'echet derivative in \cref{eq:tangent} following the topography-dependent tangent/adjoint framework \cite{ref13,ref24,ref31,ref49,ref69,ref72}; the latter is the exact algebraic transpose of the same discrete maps, subject only to floating-point roundoff. Fully discrete adjoint formulations provide an alternative discretization route \cite{ref70}.

For \(N_s\) sources, the global Jacobian and transpose action are
\begin{equation}
J=\begin{bmatrix}J_1\\ \vdots\\ J_{N_s}\end{bmatrix},
\qquad
J^T\mathbf y=\sum_{\alpha=1}^{N_s}J_{\alpha}^T\mathbf y_{\alpha}.
\end{equation}
Thus \(J\mathbf p\) concatenates the source products, and the source-wise graph setup is naturally reusable inside a distributed matrix-free implementation.

\subsection{Matrix-free Gauss-Newton-Krylov inversion}\label{regularized-gauss-newton-and-the-augmented-krylov-least-squares-operator}

Let $F(\mathbf v)$ denote the nonlinear multi-source first-arrival map and $\mathbf d^{obs}$ the observed traveltimes. At outer iteration $k$,
\begin{equation}\label{eq:residual}
\mathbf r_k=F(\mathbf v_k)-\mathbf d^{obs}.
\end{equation}
Let $W_d$ be the diagonal data-weighting matrix, $R$ the model-increment roughness operator, and
\begin{equation}\label{eq:relative-damping}
S_k=\gamma\,\operatorname{diag}(\mathbf v_k^{-1})
\end{equation}
be a relative increment damping operator. With trade-off parameter $\lambda>0$, the stabilized Gauss-Newton increment is
\begin{equation}\label{eq:gn-subproblem}
\delta\mathbf v_k=\arg\min_{\delta\mathbf v}\left\{
\frac12\|W_d(J_k\delta\mathbf v+\mathbf r_k)\|_2^2+
\frac{\lambda^2}{2}\|R\delta\mathbf v\|_2^2+
\frac{\lambda^2}{2}\|S_k\delta\mathbf v\|_2^2\right\}.
\end{equation}
Neither $J_k$ nor a normal matrix is formed. LSMR \cite{ref71} is applied to the augmented least-squares system
\begin{equation}\label{eq:augmented-ls}
\min_{\delta\mathbf v}\left\|
\begin{bmatrix}W_dJ_k\\ \lambda R\\ \lambda S_k\end{bmatrix}\delta\mathbf v-
\begin{bmatrix}-W_d\mathbf r_k\\0\\0\end{bmatrix}\right\|_2.
\end{equation}
The corresponding matrix-free operator and transpose are
\begin{equation}\label{eq:augmented-actions}
\mathcal K_k\mathbf p=
\begin{bmatrix}W_dJ_k\mathbf p\\ \lambda R\mathbf p\\ \lambda S_k\mathbf p\end{bmatrix},\qquad
\mathcal K_k^T
\begin{bmatrix}\mathbf y\\\mathbf z_R\\\mathbf z_S\end{bmatrix}
=J_k^TW_d^T\mathbf y+\lambda R^T\mathbf z_R+\lambda S_k^T\mathbf z_S.
\end{equation}
Thus each Krylov iteration repeatedly invokes $J_k$ and $J_k^T$ at the same frozen state, which is precisely the regime in which the graph/SCC setup can be reused.

After the linearized step is computed,
\begin{equation}\label{eq:model-update}
\mathbf v_{k+1}=\mathbf v_k+t_k\delta\mathbf v_k,
\end{equation}
where $t_k$ is selected by an Armijo line search evaluated with the nonlinear weighted data misfit. The roughness and relative-damping terms stabilize the increment in \cref{eq:gn-subproblem}; the line search itself is evaluated through the nonlinear forward problem \cite{ref35}.

\subsection{Setup and repeated-application complexity}\label{setup-and-repeated-application-complexity}

Let a source-dependent graph contain \(N\) transport unknowns and \(E\) directed dependencies, and let the sizes of its nontrivial SCCs be \(k_{c}\). Graph construction, SCC decomposition, and condensation ordering are linear in graph size \cite{ref50}. If the local cyclic blocks are factorized densely, the one-time setup cost has the form

\begin{equation}\label{eq:setup-cost}
C_{setup} = O(N + E) + O\left( \sum_{c}^{}k_{c}^{3} \right).
\end{equation}

A subsequent primal or transpose application requires graph traversal plus local triangular substitutions,

\begin{equation}\label{eq:apply-cost}
C_{apply} = O(N + E) + O\left( \sum_{c}^{}k_{c}^{2} \right).
\end{equation}

If one outer Gauss-Newton state requires \(m\) tangent or transpose right-hand sides, the block method and repeated sweeping have the respective amortized forms
\begin{equation}
C_{block}(m)=C_{setup}+mC_{apply},
\qquad
C_{sweep}(m)\approx m\,n_{pass}\,C_{pass},
\end{equation}

where \(n_{pass}\) depends on geometric ordering mismatch, true feedback inside SCCs, the right-hand side, and the stopping criterion. We do not assume a universal value or convergence rate for \(n_{pass}\); it is an empirical solver quantity. The graph formulation instead removes ordering mismatch exactly and confines non-scalar work to the irreducible blocks.

The favorable regime is therefore one in which SCC condensation leaves a predominantly scalar DAG and the nontrivial blocks remain local enough for their setup to be amortized over repeated $J/J^T$ applications. Section~5 examines this regime directly.

\subsection{Overall algorithm}\label{overall-algorithm}

Let \(\mathcal B_{\alpha,k}\) and \(\mathcal B_{\alpha,k}^{T}\) denote the stored primal and reverse SCC/BTF traversals that apply \(A_{\alpha,k}^{-1}\) and \(A_{\alpha,k}^{-T}\), respectively. The complete workflow has two nested time scales. At each outer iteration, the nonlinear state, source-wise maps, dependency graph, SCC ordering, and local factors are constructed once. The inner LSMR iterations then reuse those frozen objects for every paired Jacobian and transpose-Jacobian application, as summarized in Algorithm~\ref{alg:overall-gn}.

\begin{algorithm}[H]
\caption{End-to-end matrix-free Gauss-Newton tomography with reusable SCC/BTF transport}
\label{alg:overall-gn}
\footnotesize
\begin{tabularx}{\linewidth}{@{}r@{\hspace{0.65em}}X@{}}
1 & \textbf{Input:} \(\mathbf v_0,\mathbf d^{obs},W_d,R,\lambda,\gamma,k_{\max}\); deformed grid; acquisition geometry; outer and LSMR stopping tolerances.\\
2 & \textbf{Output:} the final accepted velocity model \(\mathbf v\).\\
3 & \textbf{for} \(k=0,1,\ldots,k_{\max}\) \textbf{do}\\
4 & \quad \textit{Phase I: nonlinear state and reusable source-wise setup.}\\
5 & \quad \textbf{for} \(\alpha=1,\ldots,N_s\) \textbf{do}\\
6 & \qquad Solve \(T_{\alpha,k}\leftarrow\operatorname{FTDEE}(\mathbf v_k)\).\\
7 & \qquad Assemble \(A_{\alpha,k}=I-W_{\alpha,k}\), and freeze the maps \((C,M,P,D)_{\alpha,k}\).\\
8 & \qquad Build \(\mathcal G_{\alpha,k}=\{j\to i:w_{ij}>0\}\); compute its SCCs and a topological order \(\Pi_{\alpha,k}\) of the condensation DAG.\\
9 & \qquad For every nontrivial SCC \(C\), factor \(P_CA_C=L_CU_C\); retain scalar substitution for singleton SCCs.\\
10 & \qquad Store \(\mathcal S_{\alpha,k}\) and construct the forward and reverse traversals \(\mathcal B_{\alpha,k}\) and \(\mathcal B_{\alpha,k}^{T}\).\\
11 & \quad \textbf{end for}\\
12 & \quad \textit{Phase II: nonlinear residual and outer stopping test.}\\
13 & \quad \(\mathbf r_k\leftarrow F(\mathbf v_k)-\mathbf d^{obs}\); if the outer criterion holds, \textbf{return} \(\mathbf v_k\).\\
14 & \quad \textit{Phase III: paired matrix-free actions reused throughout LSMR.}\\
15 & \quad For any \(\mathbf p\), and for each \(\alpha\), set \(\delta\mathbf s_{\alpha}\leftarrow M_{\alpha,k}\mathbf p\) and \(\mathbf q_{\alpha}\leftarrow C_{\alpha,k}\delta\mathbf s_{\alpha}\).\\
16 & \quad Apply \(\delta\mathbf T_{\alpha}\leftarrow\mathcal B_{\alpha,k}\mathbf q_{\alpha}\), and return \([J_k\mathbf p]_{\alpha}\leftarrow P_{\alpha,k}\delta\mathbf T_{\alpha}+D_{\alpha,k}\delta\mathbf s_{\alpha}\).\\
17 & \quad For any \(\mathbf y=[\mathbf y_{\alpha}]\), and for each \(\alpha\), apply \(\boldsymbol\lambda_{\alpha}\leftarrow\mathcal B_{\alpha,k}^{T}P_{\alpha,k}^{T}\mathbf y_{\alpha}\).\\
18 & \quad Form \(\mathbf g_{\alpha}\leftarrow M_{\alpha,k}^{T}(C_{\alpha,k}^{T}\boldsymbol\lambda_{\alpha}+D_{\alpha,k}^{T}\mathbf y_{\alpha})\), and return \(J_k^{T}\mathbf y\leftarrow\sum_{\alpha=1}^{N_s}\mathbf g_{\alpha}\).\\
19 & \quad Define \(\mathcal K_k\) and \(\mathcal K_k^T\) from \cref{eq:augmented-actions} using these paired actions.\\
20 & \quad \(\delta\mathbf v_k\leftarrow\operatorname{LSMR}(\mathcal K_k,\mathcal K_k^T,[-W_d\mathbf r_k;\,\mathbf0;\,\mathbf0])\).\\
21 & \quad \textit{Phase IV: nonlinear globalization and model refresh.}\\
22 & \quad \(t_k\leftarrow\operatorname{Armijo}(\mathbf v_k,\delta\mathbf v_k)\), evaluated with nonlinear forward solves.\\
23 & \quad \(\mathbf v_{k+1}\leftarrow\mathbf v_k+t_k\delta\mathbf v_k\); discard the old source setups after the step is accepted.\\
24 & \textbf{end for}; \textbf{return} the last accepted model.\\
\end{tabularx}
\end{algorithm}

Phases I and II are executed once for each outer model. Phase III may be invoked many times by LSMR, but it performs only stored block traversals, local substitutions, and the already frozen source maps. The same discrete objects are used in both directions, which preserves the algebraic transpose pairing. Once the line search accepts a new model, the frozen source states are no longer reused and Phase I rebuilds them at the next outer iteration.

\section{Numerical Experiments}\label{numerical-experiments}

We begin the numerical study with a manufactured point-source problem for which the continuous tangent solution is known, allowing the spatial discretization to be assessed independently of errors from the nonlinear state solve. We then vary the grid deformation and wavefront geometry to examine how the full metric changes the dependency ordering of the frozen transport. The same transport construction is subsequently used in a three-dimensional tomography example, where the cost of repeated SCC/BTF applications is compared directly with converged directional sweeping before both approaches are embedded in the complete matrix-free Gauss-Newton inversion summarized in Algorithm~\ref{alg:overall-gn}.

\subsection{Accuracy and discrete consistency on a deformed grid}\label{accuracy-and-discrete-consistency-on-a-deformed-grid}

We first consider the logical domain $\xi,\eta\in[-L/2,L/2]$ and $\zeta\in[0,H]$, with $L=4$ km and $H=2$ km. The physical grid is generated by
\begin{equation}\label{eq:test-map}
x=\xi,\qquad y=\eta,\qquad z=\zeta+\left(1-\frac{\zeta}{H}\right)h(\xi,\eta),
\end{equation}
where
\begin{equation}\label{eq:test-topo}
h(\xi,\eta)=A\sin\left(\frac{2\pi\xi}{L}\right)\sin\left(\frac{2\pi\eta}{L}\right),\qquad A=200\ \mathrm{m}.
\end{equation}
The upper boundary follows the sinusoidal surface in \cref{eq:test-topo}, whereas the bottom boundary remains flat. The Jacobian determinant is positive on every grid used below. To measure the strength of the nonorthogonality, we define
\begin{equation}\label{eq:muG}
\mu_G=\max_{\mathbf{x}}\max\left\{
\frac{|G_{\xi\eta}|}{\sqrt{G_{\xi\xi}G_{\eta\eta}}},
\frac{|G_{\xi\zeta}|}{\sqrt{G_{\xi\xi}G_{\zeta\zeta}}},
\frac{|G_{\eta\zeta}|}{\sqrt{G_{\eta\eta}G_{\zeta\zeta}}}
\right\}.
\end{equation}
Its value remains between 0.2999 and 0.3083 over the four refinement levels, so refinement does not remove the cross-metric coupling present in the test.

The background velocity is homogeneous, $v_0=3000\ \mathrm{m\,s^{-1}}$, with $s_0=v_0^{-1}$. A point source is placed at the center logical node, corresponding to $\mathbf{x}_s=(0,0,1000\ \mathrm{m})$, and the exact first-arrival field is
\begin{equation}\label{eq:test-T}
T(\mathbf{x})=s_0 r,\qquad r=\|\mathbf{x}-\mathbf{x}_s\|_2.
\end{equation}
We prescribe the smooth slowness perturbation
\begin{equation}\label{eq:test-ds}
\delta s(\mathbf{x})=s_0\left[c_0+c_x\frac{x-x_s}{L}+c_y\frac{y-y_s}{L}+c_z\frac{z-z_s}{H}\right],
\end{equation}
with $(c_0,c_x,c_y,c_z)=(0.02,0.04,-0.03,0.02)$. In a homogeneous background the characteristic from $\mathbf{x}_s$ to $\mathbf{x}$ is a straight segment, and \cref{eq:tangent} reduces along that segment to $d(\delta T)/dr=\delta s$. Since \cref{eq:test-ds} is linear in space, the exact tangent solution is
\begin{equation}\label{eq:test-dT}
\delta T_{\mathrm{ex}}(\mathbf{x})=
\int_0^r \delta s\,d\rho
=\frac{r}{2}\left[\delta s(\mathbf{x}_s)+\delta s(\mathbf{x})\right].
\end{equation}

To isolate the tangent discretization from nonlinear state-solver error, the analytical background field in \cref{eq:test-T} is sampled directly at the grid nodes. The full metric, face-based upwind coefficients, row normalization, and calibrated right-hand side are then formed exactly as in Sections~2--3. The source seed, one outer node layer, and the shrinking neighborhood $r\le 3h$ are excluded from the error norm, with $h$ the maximum logical-grid spacing. For the retained set $\mathcal C_h$, we report
\begin{equation}\label{eq:test-errors}
E_2(h)=\frac{\|\delta T_h-\delta T_{\mathrm{ex}}\|_{2,\mathcal C_h}}{\|\delta T_{\mathrm{ex}}\|_{2,\mathcal C_h}},\qquad
E_\infty(h)=\frac{\|\delta T_h-\delta T_{\mathrm{ex}}\|_{\infty,\mathcal C_h}}{\|\delta T_{\mathrm{ex}}\|_{\infty,\mathcal C_h}},
\end{equation}
and the observed order between successive grids,
\begin{equation}\label{eq:test-order}
p(h_c,h_f)=\frac{\log[E(h_c)/E(h_f)]}{\log(h_c/h_f)}.
\end{equation}

Table~\ref{tab:refinement} presents the refinement results together with three algebraic checks. The normalized solve residual tests the frozen linear solve. The inverse-pair error is
\begin{equation}\label{eq:inverse-pair}
e_{\mathrm{inv}}=
\frac{|\langle y,A^{-1}b\rangle-\langle A^{-T}y,b\rangle|}
{\max\{|\langle y,A^{-1}b\rangle|,|\langle A^{-T}y,b\rangle|,1\}},
\end{equation}
for independent random vectors $b$ and $y$. The final diagnostic, $e_{\mathrm{scale}}$, uses the uniform velocity-scaling identity derived in Section~3.2.

\begin{table}[t]
\caption{Grid refinement and algebraic checks for the full-metric point-source tangent test. Errors $E_2$ and $E_\infty$ are measured on $\mathcal C_h$; observed orders are given in parentheses.}\label{tab:refinement}
\centering\small
\resizebox{\textwidth}{!}{%
\begin{tabular}{ccccccc}
\toprule
Grid & $h$ (m) & $E_2$ ($p_2$) & $E_\infty$ ($p_\infty$) & $r_{\mathrm{solve}}$ & $e_{\mathrm{inv}}$ & $e_{\mathrm{scale}}$\\
\midrule
$21\times21\times11$ & 200 & $1.9062\times10^{-2}$ (--) & $2.1271\times10^{-2}$ (--) & $8.66\times10^{-16}$ & $1.66\times10^{-15}$ & $4.35\times10^{-9}$\\
$41\times41\times21$ & 100 & $9.8331\times10^{-3}$ (0.9550) & $1.1646\times10^{-2}$ (0.8691) & $1.71\times10^{-15}$ & $3.56\times10^{-16}$ & $2.50\times10^{-9}$\\
$81\times81\times41$ & 50 & $5.0761\times10^{-3}$ (0.9539) & $6.2239\times10^{-3}$ (0.9039) & $3.39\times10^{-15}$ & $1.01\times10^{-14}$ & $1.60\times10^{-9}$\\
$161\times161\times81$ & 25 & $2.5937\times10^{-3}$ (0.9687) & $3.3065\times10^{-3}$ (0.9125) & $6.78\times10^{-15}$ & $7.80\times10^{-15}$ & $8.30\times10^{-10}$\\
\bottomrule
\end{tabular}}
\end{table}

The spatial errors decrease monotonically with refinement. The $L^2$ orders remain close to one and the $L^\infty$ order increases toward one, which is consistent with the expected behavior of the face-based first-order upwind tangent discretization on this nonorthogonal grid. We do not infer a general convergence theorem for arbitrary deformations from this manufactured example. The source and boundary exclusions do not create the observed trend: on the finest grid the all-node relative $L^2$ error is essentially the same as that measured on $\mathcal C_h$.

The algebraic errors are much smaller than the spatial discretization error. The solve residual remains near machine precision, as do the inverse-pair and direct $A/A^T$ dot-product tests. The scaling check also decreases with refinement after accounting for the retained single-precision coefficient. These results indicate that the error in $\delta T_h$ is controlled by the spatial discretization rather than by an inconsistent transpose or an inaccurate solution of the frozen linear system. The same pairing is tested again in Section~5.3 after the remaining model, source, receiver, and distributed accumulation operators are included.

\subsection{Dependency structure on full-metric deformed grids}\label{dependency-structure-on-full-metric-deformed-grids}

We next isolate the ordering effect of the full metric. The metric construction, face-based coefficients, row normalization, and dependency orientation are kept unchanged, while the surface deformation and wavefront geometry are varied in a controlled manner. The base domain is $4\ \mathrm{km}\times4\ \mathrm{km}\times2\ \mathrm{km}$ and contains $41\times41\times25$ nodes. Its upper surface is defined by
\begin{equation}\label{eq:graph-map}
z=\zeta+\left(1-\frac{\zeta}{H}\right)A
\sin\left(\frac{2\pi x}{L_x}\right)
\sin\left(\frac{2\pi y}{L_y}\right),
\end{equation}
with $A$ varied from zero to 600 m. The cross-metric strength is again measured by \cref{eq:muG}.

Three surface-source locations are considered. The first family uses a homogeneous background with radial traveltime
\begin{equation}\label{eq:radial-state}
T(\mathbf{x})=\frac{\|\mathbf{x}-\mathbf{x}_s\|}{v_0}.
\end{equation}
A second family changes the wavefront geometry independently of the grid deformation by prescribing
\begin{equation}\label{eq:curved-state}
T(\mathbf{x})=\frac{r}{v_0}
+\epsilon_T\sin\left(\frac{4\pi X}{L_x}\right)
\sin\left(\frac{4\pi Y}{L_y}\right)
\sin\left(\frac{\pi z}{H}\right)-C_s,
\end{equation}
where $X=x+L_x/2$, $Y=y+L_y/2$, and $C_s$ enforces $T(\mathbf{x}_s)=0$. For $\epsilon_T>0$, slowness is defined from $s=\|\nabla_{\mathbf{x}}T\|$, so the continuous isotropic Eikonal equation is satisfied by construction away from the source. Combining three values of $\epsilon_T$, seven deformation amplitudes, and three source locations gives 63 frozen graphs.

For each graph we measure
\begin{equation}\label{eq:graph-fractions}
f_{\mathrm{vio}}=\frac{N_{\mathrm{vio}}}{N_E},\qquad
f_{\mathrm{cyc}}=\frac{N_{\mathrm{cyc}}}{N},\qquad
f_{\mathrm{unr}}=\frac{N_{\mathrm{Kahn\,unresolved}}}{N}.
\end{equation}
Here $N_{\mathrm{vio}}$ counts dependencies that oppose increasing background traveltime, $N_{\mathrm{cyc}}$ counts vertices in nontrivial SCCs, and $N_{\mathrm{Kahn\,unresolved}}$ is the residual population after scalar Kahn elimination. Thus $f_{\mathrm{vio}}$ measures failure of one proposed scalar key, whereas $f_{\mathrm{cyc}}$ measures the part that cannot be made scalar triangular by any permutation.

\subsubsection{Traveltime-order loss and scalar reorderability}\label{traveltime-order-loss-and-scalar-reorderability}

All nine flat-grid cases are traveltime ordered and acyclic. Deformation changes this immediately: 52 of the 54 deformed cases contain order-violating dependencies while remaining DAGs, and only two develop nontrivial SCCs. The distinction predicted in Section~3 is therefore visible in the controlled calculations. Loss of the traveltime key is common, whereas closed algebraic feedback is much more restrictive.

Table~\ref{tab:regimes} gives three representative examples. Case A is the Cartesian radial limit. Case B uses the strongest deformation in the radial family; a substantial fraction of its dependencies violate the traveltime order, yet Kahn elimination still processes the entire graph. Case C has a comparable fraction of order-violating edges but a curved wavefront, and nontrivial SCCs appear. The progression from A to C separates a traveltime-ordered DAG, a reorderable DAG, and a locally irreducible graph without changing the discrete transport construction.

\begin{table}[t]
\caption{Representative dependency regimes on the controlled $41\times41\times25$ grid.}\label{tab:regimes}
\centering\scriptsize
\resizebox{\textwidth}{!}{%
\begin{tabular}{cccccccccc}
\toprule
Case & State & $A$ (m) & $\epsilon_T$ (s) & $\mu_G$ & $f_{\mathrm{vio}}$ (\%) & nontrivial SCCs & $f_{\mathrm{cyc}}$ (\%) & largest SCC & $f_{\mathrm{unr}}$ (\%)\\
\midrule
A & radial & 0 & 0 & $\approx0$ & 0 & 0 & 0 & -- & 0\\
B & radial & 600 & 0 & 0.689 & 6.993 & 0 & 0 & -- & 0\\
C & curved Eikonal & 500 & 0.050 & 0.621 & 6.340 & 6 & 0.195 & 20 & 14.991\\
\bottomrule
\end{tabular}}
\end{table}

The local sign mechanism can also be checked directly. Every order-violating face satisfies the reversal criterion in \cref{eq:reversal} and the corresponding cross-metric dominance relation derived from it. The observed reversals are therefore produced by the full-metric coupling itself rather than by a graph-labeling tolerance. At the same time, case B shows why such reversals should not be called cycles: they can be removed completely by another scalar ordering.

\subsubsection{Local SCCs and downstream ordering obstruction}\label{local-sccs-and-downstream-ordering-obstruction}

Case C illustrates the scale separation described by \cref{prop:kahn}. Only 82 vertices belong to nontrivial SCCs, whereas scalar Kahn elimination leaves 6,300 vertices unresolved. Most of that residual is not cyclic; it lies downstream of the cyclic components and therefore cannot be eliminated until those components are resolved. The unresolved-to-cyclic ratio is 76.8 for the base grid.

The same separation persists under refinement. As shown in Table~\ref{tab:scc-refinement}, the fraction of vertices in nontrivial SCCs decreases across the four grids, whereas the Kahn-unresolved fraction stays close to 15\%. The changing SCC count and block size are not interpreted as an asymptotic geometric law. The relevant observation is narrower: refinement over the tested range does not remove the distinction between a localized irreducible core and its much larger downstream closure.

\begin{table}[t]
\caption{Resolution dependence of the representative localized-SCC case ($A=500$ m, $\epsilon_T=0.050$ s, quarter source).}\label{tab:scc-refinement}
\centering\scriptsize
\resizebox{\textwidth}{!}{%
\begin{tabular}{ccccccccc}
\toprule
Grid & Nodes & nontrivial SCCs & cyclic nodes & $f_{\mathrm{cyc}}$ (\%) & largest SCC & Kahn unresolved & $f_{\mathrm{unr}}$ (\%) & unresolved/cyclic\\
\midrule
$21\times21\times13$ & 5,733 & 2 & 32 & 0.558 & 16 & 897 & 15.646 & 28.0\\
$41\times41\times25$ & 42,025 & 6 & 82 & 0.195 & 20 & 6,300 & 14.991 & 76.8\\
$61\times61\times37$ & 137,677 & 8 & 244 & 0.177 & 42 & 20,643 & 14.994 & 84.6\\
$81\times81\times49$ & 321,489 & 10 & 502 & 0.156 & 64 & 47,978 & 14.924 & 95.6\\
\bottomrule
\end{tabular}}
\end{table}

Two additional checks were made to ensure that the nontrivial components are not created solely by retained precision or negligible coefficients. Small random perturbations of the stored traveltime leave the baseline SCC structure unchanged over a broad range, and progressively removing weak dependencies does not eliminate all nontrivial SCCs until the cutoff is made much larger than roundoff-level weights. These tests support the interpretation of the cyclic blocks as a structural feature of the frozen full-metric transport in this example.

\subsection{Repeated operator applications in a three-dimensional tomography problem}\label{repeated-operator-applications-in-a-three-dimensional-tomography-problem}

We next consider the three-dimensional tomography model in Figure~\ref{fig:true-model}. The boundary-conforming grid contains $121\times121\times56$ nodes, corresponding to 819,896 transport unknowns and 792,000 inversion cells. The target contains four compact velocity perturbations beneath an irregular surface, and the acquisition consists of 100 sources and one million first-arrival traveltimes. Independent zero-mean Gaussian noise is added to the synthetic traveltimes. The data uncertainty combines a $0.3$ ms absolute floor and a $0.3\%$ relative term in quadrature, and each datum is weighted by the inverse of its standard deviation. All timing tests are performed on an AMD EPYC 9654 96-Core Processor using 12 MPI processes with 8 OpenMP threads per process. Sources are distributed across MPI processes, while both the nonlinear wavefront solve and the linearized SCC transport use intra-source OpenMP parallelism. The inner linearized Gauss-Newton systems are solved using LSMR with a Krylov optimality tolerance of $10^{-4}$.

\begin{figure}[t]
\centering
\includegraphics[width=0.95\textwidth]{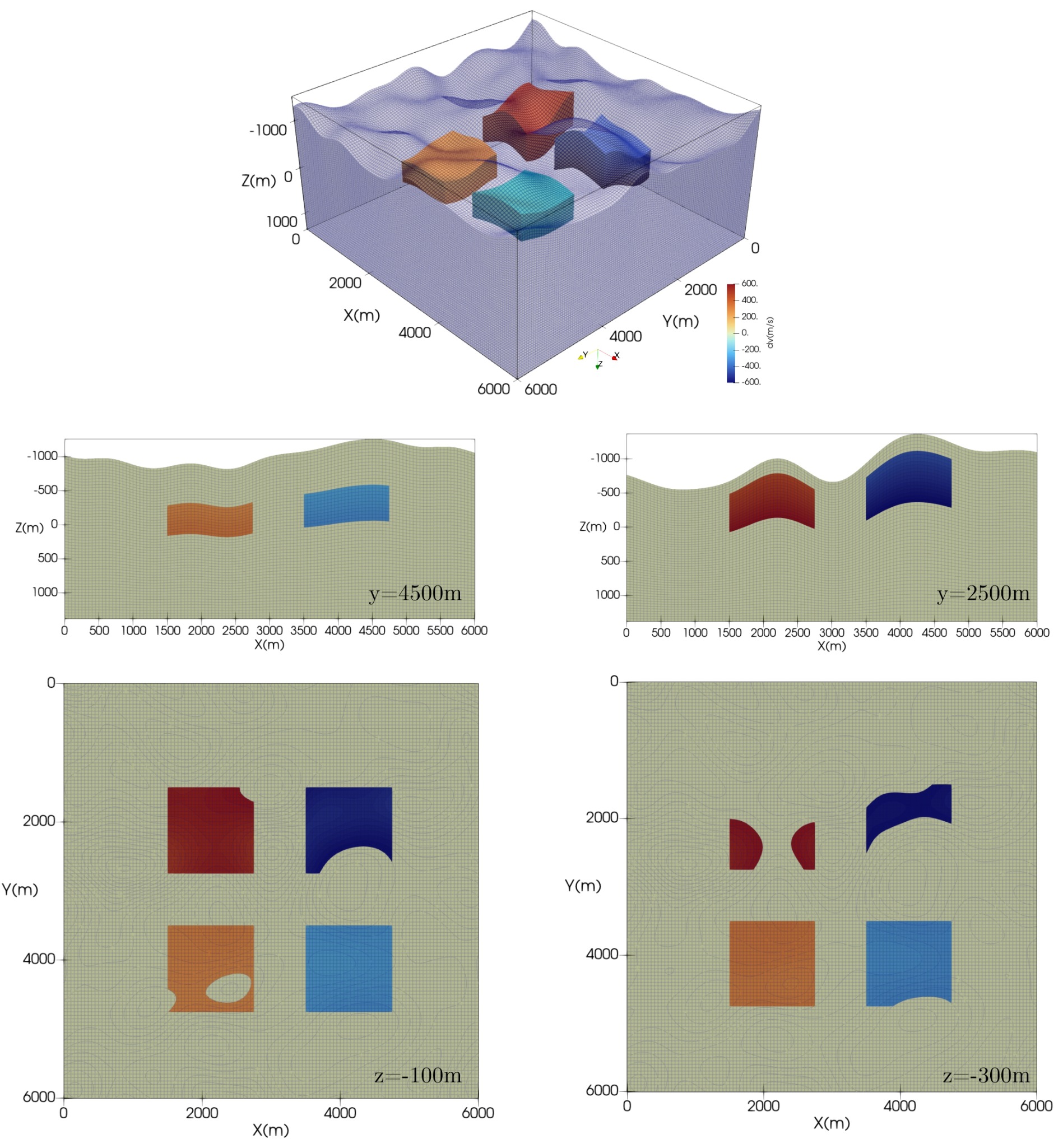}
\caption{Synthetic three-dimensional velocity-perturbation model on the boundary-conforming deformed grid. The upper panel shows the three-dimensional target and surface deformation; the middle and lower rows show representative vertical and horizontal sections, respectively. Colors denote velocity perturbation $\Delta v$ in $\mathrm{m\,s^{-1}}$.}\label{fig:true-model}
\end{figure}

The initial source-dependent graphs retain the same structural separation observed in the controlled examples. Traveltime-order violations occur for every source and nontrivial SCCs occur for most sources, yet only a small fraction of all source-node instances belongs to those SCCs. Scalar Kahn elimination leaves a much larger downstream population unresolved. The structure rows of Table~\ref{tab:production} quantify this contrast.

Before timing the two transport procedures, we compare their actions on identical deterministic directions. The end-to-end dot-product test includes model-to-slowness conversion, source terms, transport, receiver sampling, and distributed accumulation. The SCC/BTF and converged sweeping implementations agree to tight relative accuracy for both $Jp$ and $J^Tq$, so the timing comparison concerns two solution procedures for the same frozen operator.

\begin{table}[t]
\caption{Graph structure, algebraic consistency, and repeated-application timing at the initial model. Source-node fractions are accumulated over the 100 source-specific graphs; wall times use the slowest MPI rank.}\label{tab:production}
\centering\footnotesize
\begin{tabularx}{\textwidth}{@{}p{0.16\textwidth}X>{\raggedleft\arraybackslash}p{0.25\textwidth}@{}}
\toprule
Category & Quantity & Result\\
\midrule
Structure & Traveltime-order-violating dependencies & $9{,}611{,}332/243{,}148{,}592=3.953\%$\\
Structure & Source graphs with nontrivial SCCs & $97/100$\\
Structure & Source-node instances in nontrivial SCCs & $19{,}188/81{,}989{,}600=0.0234\%$\\
Structure & Largest nontrivial SCC & 690 nodes\\
Structure & Kahn-unresolved source-node instances & $42{,}308{,}727/81{,}989{,}600=51.6\%$\\
\addlinespace
Verification & End-to-end $J/J^T$ dot-product relative error & $6.08\times10^{-16}$\\
Verification & Relative difference, SCC/BTF vs. sweeping, $Jp$ & $6.95\times10^{-14}$\\
Verification & Relative difference, SCC/BTF vs. sweeping, $J^Tq$ & $6.94\times10^{-12}$\\
\addlinespace
Performance & One-time graph/SCC setup, maximum rank & 1.02 s\\
Performance & Paired $Jp+J^Tq$ wall time & 84.70 s / 0.313 s\\
Performance & Paired application speedup & $270.3\times$\\
\bottomrule
\end{tabularx}
\end{table}

One paired $Jp+J^Tq$ application requires 84.70 s with directional sweeping and 0.313 s after SCC/BTF setup. The maximum-rank setup cost is 1.02 s, so the setup is recovered within the first paired application in this experiment. The measured ratio is specific to the present grid, model, tolerance, and parallel configuration; its relevance here is the large separation between one-time structural analysis and the repeated cost paid inside a Krylov solve.

\subsection{Tomographic inversion}\label{tomographic-inversion}

The final experiment embeds the reusable transport in the complete matrix-free inversion. The roughness and relative-increment stabilization described in Section~4.5 are used with a trade-off parameter of $0.01$ and $\gamma=0.1$. The data fit is monitored with the weighted RMS residual, using the data standard deviations defined above. The outer iteration is stopped when the relative WRMS reduction between successive Gauss-Newton iterations falls below $1\%$. This criterion is first satisfied after eight iterations, with a final WRMS of $1.0372$. Figure~\ref{fig:recovered} shows the recovered model obtained with the SCC/BTF implementation.

\begin{figure}[t]
\centering
\includegraphics[width=0.95\textwidth]{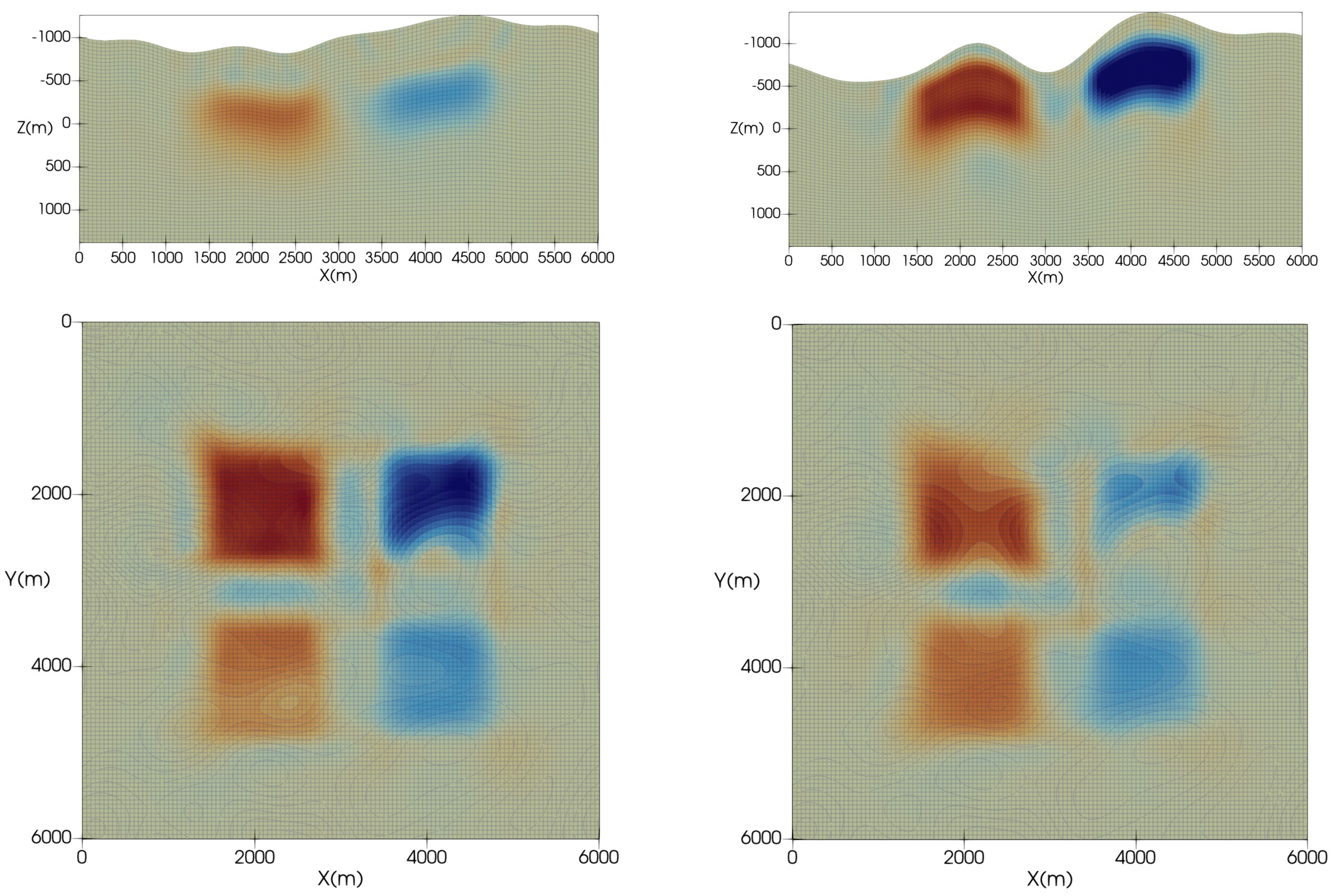}
\caption{Recovered velocity perturbations using the reusable SCC/BTF transport. Representative vertical and horizontal sections are shown with the same velocity-perturbation scale as Figure~\ref{fig:true-model}.}\label{fig:recovered}
\end{figure}

The recovered model reproduces the location and sign of the target perturbations, with the expected smoothing of sharp interfaces from the quadratic increment regularization. The direct operator tests in Table~\ref{tab:production} already establish the numerical equivalence of SCC/BTF and converged sweeping for the frozen primal and transpose actions. A second, visually redundant reconstruction from the sweeping implementation is therefore omitted.

Figure~\ref{fig:convergence-fit} summarizes the inversion behavior and the traveltime fit for the 20th source. The first panel shows the WRMS history together with the number of Krylov iterations used at each outer step; the remaining panels compare the observed and final predicted traveltimes and show their residuals.

\begin{figure}[t]
\centering
\includegraphics[width=\textwidth]{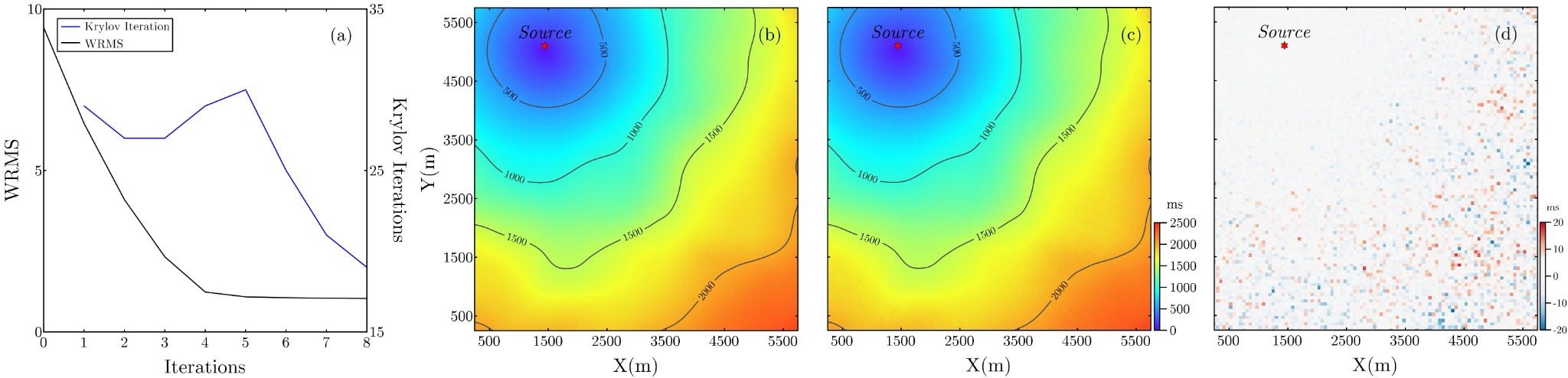}
\caption{Convergence behavior and traveltime fitting results for the three-dimensional tomography experiment. (a) Evolution of the weighted RMS data residual and the number of Krylov iterations over the Gauss-Newton iterations. (b) Observed first-arrival traveltimes for the 20th source. (c) Predicted traveltimes for the same source at the final model. (d) Corresponding traveltime residuals.}\label{fig:convergence-fit}
\end{figure}

For a fixed-work end-to-end timing comparison, both transport implementations were also run for the same 20 Gauss-Newton iterations with otherwise identical settings. Repeated sweeping required 10,744 s, whereas reusable SCC/BTF transport required 607 s, an overall wall-clock speedup of 17.7. This factor is smaller than the paired-operator ratio in Table~\ref{tab:production} because nonlinear forward solves, line-search evaluations, regularization, Krylov vector operations and communication, and model output are common to both calculations.

\section{Conclusions and outlook}\label{conclusions}

Matrix-free Gauss-Newton traveltime tomography avoids explicit Jacobian storage but repeatedly applies a frozen sensitivity transport and its transpose. On boundary-conforming deformed grids, full-metric coupling can invalidate background traveltime as a scalar ordering even though continuous first-arrival causality is preserved. Repeating directional sweeps therefore spends substantial work rediscovering an ordering problem that does not change during the inner Krylov solve.

The graph formulation developed here separates ordering loss from true irreducibility. SCC condensation recovers an exact block topological order, so scalar substitution is retained on singleton components and only nontrivial SCCs require local block solves. The resulting graph, ordering, and local factors are reusable for both primal and transpose actions. Controlled experiments show that traveltime-order violations can occur without cycles and that a small cyclic core can obstruct a much larger downstream scalar elimination region. In the three-dimensional example, the irreducible components remain localized and the reusable block traversal reproduces the converged sweeping actions while greatly reducing their repeated cost.

The present dense local treatment is intended for problems in which nontrivial SCCs remain modest. Larger or more strongly coupled components would motivate sparse or iterative block solvers and more explicit parallel scheduling on the condensation DAG. The broader principle is independent of those local choices: for a matrix-free inverse problem that repeatedly applies a frozen upwind transport operator, its dependency structure can be analyzed once and reused rather than reconstructed for every right-hand side.

\FloatBarrier
\section*{Acknowledgments}
The authors gratefully acknowledge Tongji University for providing the high-performance computing resources used in this work.

\end{document}